\documentclass[11pt, oneside]{amsart}
\usepackage[foot]{amsaddr}
\usepackage{amsfonts}
\usepackage[T1]{fontenc} 
\usepackage[utf8]{inputenc}
\usepackage[english]{babel}
\usepackage[a4paper, total={6in, 9in}]{geometry}
\usepackage{hyperref}
\usepackage{amsmath}
\usepackage{amsthm}
\usepackage{amssymb}
\usepackage{mathtools}
\usepackage{indentfirst}
\usepackage{array}
\usepackage{dsfont}

\theoremstyle{plain}
\newtheorem{theorem}{Theorem}[section]

\newtheorem{prop}[theorem]{Proposition}
\newtheorem{lemma}[theorem]{Lemma}

\theoremstyle{remark}

\newcommand{\N}{\mathbb{N}}
\newcommand{\R}{\mathbb{R}}
\newcommand{\C}{\mathbb{C}}

\renewcommand{\C}{\mathbb{C}}

\newcommand{\abs}[1]{\left|#1\right|}
\newcommand{\im}{\mathrm{Im}\,}
\newcommand{\ree}{\mathrm{Re}\,}
\newcommand{\norm}[1]{\left\lVert #1\right\rVert}

\DeclareMathOperator{\diag}{diag}

\begin{document}
\subjclass{11F30}
\keywords{Jacquet-Whittaker function, automorphic forms on $\mathrm{GL}(n)$, Stade formula, sup-norm problem}

\title{Bounds on Whittaker functions for $\mathrm{GL}(n)$}

\author{Gilles Felber}
\address[G. Felber and D. T\'oth]{HUN-REN Alfr\'ed R\'enyi Institute of Mathematics,
Budapest, Hungary}
\author{D\'avid
T\'oth}
\address[D. T\'oth]{
Department of Algebra and Geometry,  Institute of Mathematics,
Budapest University of Technology and Economics,
M\H{u}egyetem rkp.\ 3, H-1111 Budapest, Hungary}
\email{felber@renyi.hu, toth.david.akos@ttk.bme.hu}

\begin{abstract}
We prove a new estimate on Jacquet-Whittaker function for $\mathrm{GL}_n(\mathbb R)$, assuming that the Langlands parameters are purely imaginary and well-spaced. This gives an upper bound for the global sup-norm of the Whittaker function that matches the lower bound of Brumley-Templier in the exponent up to a linear error in $n$.
\end{abstract}

\maketitle

\section{Introduction}\label{sec:introduction}

In this paper, we study
the (standard archimedean spherical)
Jacquet–Whittaker function $\mathcal W_\mu$ on $\mathrm{GL}_n(\R)$ associated with a 
cusp form $\phi$
and indexed by the Langlands parameters
\begin{equation}\label{eq:Langlands_params}
\mu=(\mu_1,\dots,\mu_n)\in \C^n\quad\textrm{ with }\quad\mu_1+\dots+\mu_n=0,    
\end{equation}
 so that the automorphic representation induced by $\phi$ is tempered,
 yielding that
 \begin{equation}\label{eq:Langlands_params_tempered}
     \mu=(\mu_1,\dots,\mu_n)\in (i\R)^n
 \end{equation}
 holds as well.
We will also assume that the parameters are well-spaced, i.e.\ 
\begin{equation}\label{eq:Langlands_params_well_spaced}
|\mu_i-\mu_j|\geq cT_\mu\qquad (1\leq i<j\leq n)    
\end{equation}
holds for some constant $c>0$, where 
\begin{equation}\label{eq:T_mu_def}
T_\mu:=\max(2,|\mu_1|,\dots,|\mu_n|).
\end{equation}
This covers $99\%$ of all Maass forms (see the introduction of \cite{BB20} for more details).
The constant $c$ will be implicit in the rest of the article and we will use the Vinogradov notation $\gg$ to indicate 
\eqref{eq:Langlands_params_well_spaced}
from now on.  Recall that $T_\mu\asymp_n\lambda_\phi^{1/2}$ is related to the Laplace eigenvalue of the cusp form $\phi$ \cite[Equation (7)]{BHM20}.

The function $\mathcal{W}_\mu$ is defined as follows. By the Iwasawa decomposition,
every matrix $g\in\mathrm{GL}_n(\R)$ can be uniquely written as $g=utk$, where
$u\in \mathrm{U}_n(\R)$ is unipotent upper-triangular,
$t=\mathrm{diag}(t_1,\dots,t_n)$ is diagonal with positive diagonal entries,
and $k\in\mathrm{O}_n(\R)$ is orthogonal.
The height function $H_\mu$ is given for every $\mu\in\C^n$ by
\[
H_\mu(g)=\prod_{j= 1}^nt_j^{(n+1)/2-j+ \mu_j}\quad (g\in\mathrm{GL}_n(\R)).
\]
It is right-invariant 
under $\mathrm O_n(\R)$ by the Iwasawa decomposition, and if in
addition \eqref{eq:Langlands_params} 
is satisfied, 
then it is also invariant under $\mathrm Z_n(\R)$ (the center of $\mathrm{GL}_n(\R)$), so
it can be regarded as a function on 
the generalized upper half-plane
$\mathfrak h^n = \mathrm{GL}_n(\R)/(\mathrm O_n(\R)\cdot \mathrm Z_n(\R))$.
We define the Jacquet–Whittaker function by
\begin{equation}\label{eq:Whittaker_integral}
\mathcal W_\mu(g)=\int_{\mathrm U_n(\R)}
H_\mu(wug)\overline{\psi(u)}\,du
\end{equation}
where  $w$ is the long Weyl element and
\[
\psi(u)=\exp(2\pi i (u_{1,2}+\dots+u_{n-1,n})), \quad u = (u_{ij})\in \mathrm  U_n(\R).
\]
The integral in \eqref{eq:Whittaker_integral}
converges locally uniformly for any $g\in \mathrm{GL}_n(\R)$ and for any $\mu\in\C^n$ such
that $\ree\mu_1>\dots>\ree \mu_n$, and hence defines a holomorphic function $\mu\mapsto \mathcal W_\mu(g)$.
 Jacquet~\cite{Ja} proved the holomorphic continuation of this function
 for any $\mu\in\C^n$. 
 
 All this implies that  $\mathcal W_\mu(g)$  is invariant under $\mathrm Z_n(\R)$
 once \eqref{eq:Langlands_params}
 is satisfied,
right-invariant under $\mathrm O_n(\R)$
and  transforms by
a character under the left action of $\mathrm U_n(\R)$,
hence it is enough to estimate it 
 on the positive diagonal torus.
 Note that Jacquet \cite{Ja} also proved that
 the completed  Jacquet–Whittaker function
 \[
W_\mu(g):=\left(\prod_{1\leq j<k\leq n}\Gamma_\R(1+\mu_j-\mu_k)\right)\mathcal{W}_\mu(g)\,\quad
 \]
 is invariant under any permutation of the $\mu_j$'s
 (here $\Gamma_\R(s)=\pi^{-s/2}\Gamma(s/2)$), hence
 we may (and will) assume
 \[
\im \mu_1\geq \dots \geq \im \mu_n
\]
as well.
 We are going to show the following:

\begin{theorem}\label{thm:main_thm}
Let $n\geq3$, $t=\mathrm{diag}(t_1,\dots,t_n)\in \mathrm{GL}_n(\R)$ with
$t_1,\dots,t_n>0$, and assume that the Langlands parameters 
satisfy \eqref{eq:Langlands_params},
\eqref{eq:Langlands_params_tempered} and \eqref{eq:Langlands_params_well_spaced}. 
Then for any $\epsilon>0$ we have
\begin{equation}\label{eq:main_thm_eq}
\mathcal W_\mu(\diag(t_1,\dots,t_{n}))
\ll_{n,\epsilon} 
T_\mu^{-\frac{3n^2-11n+14}{12}+\epsilon}
\left(\prod_{j=1}^nt_j^{n+1-2j}\right)^{1/2-\epsilon}
\exp \left(-\frac{1}{T_\mu}\sum_{j=1}^{n-1} t_j/t_{j+1}\right).
\end{equation}
\end{theorem}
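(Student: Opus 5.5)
Our plan is to argue by induction on $n$, using Stade's recursive formula, which writes the $\mathrm{GL}_n$ Whittaker function as an integral of a $\mathrm{GL}_{n-1}$ Whittaker function against products of $K$-Bessel functions. Concretely, in the variables $y_j=t_j/t_{j+1}$ we use the Mellin--Barnes/Stade representation
\[
W_{\mu}^{(n)}(y)=\int_{(\R_{>0})^{n-1}} W_{\mu'}^{(n-1)}(y')\prod_{j=1}^{n-1}K_{\nu_j}(2\pi y_j\sqrt{1+x_j^{-1}}\cdots)\,(\text{powers of }x_j,y_j)\,\frac{dx}{x}.
\]
Here $\mu'$ is an explicit shift of $(\mu_1,\dots,\mu_{n-1})$ by multiples of $\mu_n$, and the indices $\nu_j$ are linear combinations of the $\mu_i$. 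Well-spacedness, \eqref{eq:Langlands_params_well_spaced}, guarantees that $|\im\nu_j|\asymp T_\mu$ for every $j$. We first pass from $\mathcal W_\mu$ to the completed $W_\mu$. By Stirling's formula, each factor $\Gamma_\R(1+\mu_j-\mu_k)$ has size $\asymp T_\mu^{0}\exp(-\pi|\mu_j-\mu_k|/4)$. The exponential factors must cancel exactly against the exponential decay coming from the Bessel functions, so the statement is proved in the normalization $\mathcal W_\mu$ with all exponentials removed.

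The base case is $n=2$ with $\mathcal W=y^{1/2}K_{\mu}(2\pi y)$, together with a separate direct treatment of $n=3$, which starts the stated range. For $n=2$ we use the uniform Bessel bounds for imaginary order $it$:
\[
e^{\pi t/2}K_{it}(x)\ll
\begin{cases}
t^{-1/4}(t-x)^{-1/4}, & x<t-t^{1/3},\\
t^{-1/3}, & |x-t|\le t^{1/3},\\
e^{-x/T}, & x>t+t^{1/3}.
\end{cases}
\]
From these, $\mathcal W^{(2)}\ll T^{-1/4+\epsilon}y^{1/2-\epsilon}e^{-y/T}$. Note that at $n=3$ the exponent $(27-33+14)/12=2/3$, while at $n=2$ the formula would give $(12-22+14)/12=1/3$. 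The increment from $n-1$ to $n$ is $(6n-14)/12=(n-7/3)/2$, so each inductive step must gain $T^{-(n-7/3)/2}$.

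For the inductive step, we insert the bound \eqref{eq:main_thm_eq} for $\mathrm{GL}_{n-1}$ into Stade's integral and bound each Bessel factor using the uniform bound above. This leaves an $(n-1)$-dimensional integral over $x$. Roughly $n-2$ of the Bessel factors are evaluated at arguments that vary with the integration variables. Wherever such an argument is $\asymp T$, we gain $T^{-1/2}$, since the factor $t^{-1/4}(t-x)^{-1/4}$ integrates over a range of length $T$ of $x$ with the saving. This accounts for $T^{-(n-2)/2}$; the remaining $T^{1/6}$ comes from the $T^{-1/3}$ transition-region bound at a single factor. The polynomial weights in $t_j$ recombine into $\prod t_j^{(n+1-2j)/2}$ by the exponent bookkeeping in Stade's formula, up to the $\epsilon$-loss. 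The exponential factor $\exp(-\sum y_j/T)$ follows because a Bessel argument exceeding $T$ forces exponential decay in at least the corresponding $y_j$.

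The main obstacle is this saving count in the integral. One must show that the $x$-integrals do not concentrate where several Bessel arguments sit simultaneously at their turning points, where the bound degrades only to $T^{-1/3}$ each. The resulting loss of volume must be compensated so that the net gain is exactly $T^{-(n-7/3)/2}$ rather than $T^{-(n-2)/2}$. We will handle this by a dyadic decomposition of each $x_j$ relative to the turning point $|\im\nu_j|$. The well-spaced hypothesis is what makes the turning points of distinct factors occur on transversal hypersurfaces, so that the codimension of simultaneous transition regions controls their measure. We expect this transversality and the resulting volume estimate to be the hardest step.
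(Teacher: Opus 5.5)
Your proposal has a genuine gap. The step that carries all the content is the claim that the $x$-integral in the inductive step gains $T^{-(n-7/3)/2}$. You assert it and then explicitly defer it as the ``transversality and volume estimate''.

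The count you offer ($n-2$ Bessel factors at $T^{-1/2}$, one degraded to $T^{-1/3}$) is reverse-engineered from $E(n)-E(n-1)$, where $E(n)=(3n^2-11n+14)/12$. It is not derived. It ignores the measure of the region of integration. More seriously, it ignores the real powers of the integration variables that appear once you substitute the rank $n-1$ bound. That bound's factor $\prod_j (y_j')^{j(n-1-j)/2-\epsilon}$ is evaluated at arguments $y'$ that depend on $x$. Integrating these against the Bessel factors produces factors of the type $(T/y_j)^{k}$. Those factors decide both the final power of $T$ and whether the $y$-exponents come out as $j(n-j)/2$.

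The recursion itself is also only gestured at. The one used in the paper lowers the rank by two, and all of its Bessel factors have the same index $(\mu_1-\mu_n)/2$.

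There is also a structural reason to doubt that a purely pointwise induction closes. A bound of the shape \eqref{eq:main_thm_eq} is sharp only near the turning region and is lossy in the bulk. Already for $\mathrm{GL}_2$ it gives $T^{-1/3}$, while $|K_{ir}(x)|\ll e^{-\pi r/2}r^{-1/2}$ for $x\le r/2$. The inner Whittaker function is integrated precisely over the bulk. For example, inserting the pointwise rank $n-2$ bound into the paper's argument, in place of its exact $L^2$ evaluation, would replace the saving $T^{-(n-2)(n-3)/4}$ by $T^{-E(n-2)}$. For $n\geq 4$ that is a loss of $T^{(8n-30)/12}$, which grows with $n$. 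A pointwise scheme must win this back from the Bessel integrals, and that is exactly the part you leave open.

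The missing idea is the paper's. Apply Cauchy--Schwarz to Stade's recursion, splitting the last Bessel factor and a weight $\prod_j u_j^{s}$ between the two sides for convergence. The inner function then enters only through a weighted integral of $|W^{(n-2)}_{\mu'}|^2$. After a change of variables, Stade's Mellin formula \eqref{eq:Stade} evaluates this exactly in terms of $\prod_{k,l=2}^{n-1}\Gamma_\R(s+\mu_k-\mu_l)$. Stirling's formula then yields the exponentials that cancel the normalization, together with the saving $T^{-(n-2)(n-3)/4}$.

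The remaining product of Bessel functions is handled by an elementary induction using only $K_{ir}(x)\ll e^{-\pi r/2}r^{-1/3}$, which gives $T^{-(n-1)/3}$. Since $(n-1)/3+(n-2)(n-3)/4=E(n)$, no transition-region or transversality analysis is needed. The quadratic term of the exponent comes from the $L^2$ formula, not from savings in the Bessel integrals.
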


Note that as $\mathcal W_\mu$ is invariant under $\mathrm Z_n(\R)$, it
is clearly enough to show \eqref{eq:main_thm_eq} for $t=\mathrm{diag}(t_1,\dots,t_{n-1},1)$,
or equivalently, using the parametrization of \cite{BrTe, Sta90, Sta01, Sta02},
it suffices to prove
\[
\mathcal W_\mu(\diag(y_1\dots y_{n-1},\dots,y_{n-1},1))
\ll_{n,\epsilon}
T_\mu^{-\frac{3n^2-11n+14}{12}+\epsilon}
\prod_{j=1}^{n-1}y_j^{\frac{j(n-j)}2-\epsilon}
\exp \left(-\frac{1}{T_\mu}\sum_{j=1}^{n-1} y_j\right).
\]
This is the estimate that we are going to prove in 
Section~\ref{sec:proof}.  
Assumption \eqref{eq:Langlands_params_well_spaced} is only needed in Section \ref{sec:conclusion_of_the_proof}, when we estimate the Gamma factors. A bound involving quotients of Gamma functions can 
be deduced assuming only that $|\mu_1-\mu_n|\gg1$.

As in  \cite[Theorem 1]{BHM20}, \eqref{eq:main_thm_eq} implies
\begin{equation}\label{eq:infty_norm_upper_bound}
    \|\mathcal W_\mu\|_\infty\ll_{n,\epsilon}
T_\mu^{\frac{(n^3-n)-(3n^2-11n+14)}{12}+\epsilon}=T_\mu^{\frac{n^3-3n^2+10n-14}{12}+\epsilon}.
\end{equation}
This complements \cite[Theorem 1.14]{BrTe} that states 
(under the same conditions)
\begin{equation}\label{eq:BT_lower_bound}
\|\mathcal W_\mu\|_\infty\gg_{n} T_\mu^{n(n-1)(n-2)/12}.    
\end{equation}
Note that the cubic and the quadratic terms of the exponent 
agree in the upper and lower bounds above. 
More precisely, the difference in the exponent between our result and \cite{BrTe} is 
$\frac23(n-\frac74)$.
Also, the upper bound 
 \eqref{eq:infty_norm_upper_bound} improves on the 
 one given in \cite[Remark 1]{BHM20} if $n\geq4$,
 and  the main 
 ingredient of
 our improvement is the use of a Mellin transform computed by Stade (see Equation \eqref{eq:Stade}).  
 For $n=3$ the two bounds are the same,
 and in this case the improved bound 
 $\norm{\mathcal W_\mu}_\infty\gg T_\mu^{3/4}$ is stated in \cite[Theorem 1.9]{BrTe}   
while  the upper estimate $\norm{\mathcal W_\mu}_\infty\ll T_\mu$ is given in
\cite[Lemma 2]{BHM19}
(while we get $\norm{\mathcal W_\mu}_\infty\ll T_\mu^{4/3}$). We expect the optimal exponent for $n\geq4$ to sit strictly between the two bounds \eqref{eq:infty_norm_upper_bound} and \eqref{eq:BT_lower_bound}.

One of the main interest of global bounds on Whittaker functions is the global sup-norm problem for cusp forms on $\mathrm{GL}_n(\R)$. The best known lower bounds for this problem are given by \cite{BrTe} for $n\geq3$ and the best known upper bounds can be found in \cite{BHM19} for $n=3$ and \cite{BHM20} for $n\geq4$. See also the citations therein for a more detailed account of the literature. Following the proof of \cite{BHM20}, we can give a modest improvement on the bound of \cite[Theorem 4]{BHM20} for the global sup-norm problem for cusp forms on $\mathrm{GL}_n(\R)$. The improvement is only in the quadratic term of the exponent. It would be of great interest to lower their bound in the cubic term and match the corresponding term of the lower bound. One approach would be to refine the counting argument in Section 3.2 of \cite{BHM20}. Another interesting question is the case of non-tempered automorphic representations. In that case, we know that the real parts of the components of $\mu$ are strictly between $-\frac12$ and $\frac12$ (see \cite[Equation (5)]{BHM20}). It would be of interest to improve the bound of \cite[Theorem 2]{BHM20} in that generality. We hope to return to these problems soon.

\subsection{Acknowledgment}
The authors are profoundly thankful to Péter Maga, who introduced them to this topic and guided them in this project. The research towards this paper was supported by the MTA–RI Lendület “Momentum” Analytic Number Theory and Representation Theory Research Group for both authors.

\section{Bounds on integrals of \texorpdfstring{$K$}{K}-Bessel functions}

In this section we prove an estimate for the $K$-Bessel functions that will be used frequently 
in the proof of Theorem~\ref{thm:main_thm}.  In the course of the proof we are going to the need the following:
\begin{prop}
Let $a>0$, $b\in\R$ and $x>0$. Assume that $b\leq1$ or $ax\gg1$, then we have
\begin{align}\label{eq bound partial Gamma}
\int_x^\infty e^{-at}t^b\frac{dt}t\ll_be^{-ax}a^{-1}x^{b-1}.
\end{align}    
\end{prop}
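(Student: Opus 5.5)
Substitute $t=x+s$ to pull out the boundary behaviour:
\[
\int_x^\infty e^{-at}t^{b-1}\,dt=e^{-ax}\int_0^\infty e^{-as}(x+s)^{b-1}\,ds .
\]
It then suffices to show
\[
\int_0^\infty e^{-as}(x+s)^{b-1}\,ds\ll_b a^{-1}x^{b-1}
\]
under either hypothesis. We treat the two cases separately.

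\emph{Case $b\le 1$.} The exponent $b-1$ is nonpositive, so $(x+s)^{b-1}\le x^{b-1}$ for all $s\ge0$. Hence the integral is at most
\[
x^{b-1}\int_0^\infty e^{-as}\,ds=a^{-1}x^{b-1},
\]
with implied constant $1$. This case needs no assumption on $ax$.

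\emph{Case $b>1$ and $ax\gg1$.} Write $(x+s)^{b-1}=x^{b-1}(1+s/x)^{b-1}$ and use $1+s/x\le e^{s/x}$. This gives
\[
(x+s)^{b-1}\le x^{b-1}e^{(b-1)s/x}.
\]
The integral is then bounded by
\[
x^{b-1}\int_0^\infty e^{-(a-(b-1)/x)s}\,ds .
\]
This step is the only delicate one, since we must keep $a-(b-1)/x\gg a$.

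If $ax\ge 2(b-1)$, then $(b-1)/x\le a/2$, and the integral is at most $2a^{-1}x^{b-1}$. Otherwise $1\ll ax<2(b-1)$, so $ax\asymp_b 1$. In that range we instead rescale $s=xu$ directly:
\[
\int_0^\infty e^{-as}(x+s)^{b-1}\,ds=x^{b}\int_0^\infty e^{-axu}(1+u)^{b-1}\,du .
\]
Since $ax$ is bounded below by the implied constant $c_0$, the last integral is at most $\int_0^\infty e^{-c_0u}(1+u)^{b-1}\,du\ll_b 1$. Using $x\asymp_b a^{-1}$, this gives $x^b\ll_b a^{-1}x^{b-1}$.

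Combining the cases and multiplying back by $e^{-ax}$ yields \eqref{eq bound partial Gamma}. The implied constant depends on $b$ and on the constant implicit in $ax\gg1$.
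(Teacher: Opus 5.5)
Your proof is correct. For $b\le 1$ it is exactly the paper's argument; the paper bounds $t^{b-1}\le x^{b-1}$ directly, without the shift $t=x+s$.

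For $b>1$ you take a genuinely different route. The paper only sketches an induction via integration by parts, namely
\[
\int_x^\infty e^{-at}t^{b-1}\,dt=\frac{e^{-ax}x^{b-1}}{a}+\frac{b-1}{a}\int_x^\infty e^{-at}t^{b-2}\,dt .
\]
It then applies the bound with $b$ replaced by $b-1$ to the last integral. This makes the remainder $\ll_b a^{-1}e^{-ax}x^{b-1}(ax)^{-1}$, and the hypothesis $ax\gg1$ absorbs the factor $(ax)^{-1}$. After $\lceil b\rceil-1$ steps one lands in the case $b\le1$.

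You instead absorb the polynomial growth into the exponential via $(1+s/x)^{b-1}\le e^{(b-1)s/x}$. This handles the range $ax\ge 2(b-1)$ with the explicit constant $2$, without using $ax\gg1$ at all. You then treat the remaining bounded window $c_0\le ax<2(b-1)$ by rescaling $s=xu$.

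What each buys:
\begin{itemize}
\item Your version is non-recursive. It makes transparent that the hypothesis $ax\gg1$ is needed only in that compact window.
\item The paper's version isolates the leading term $a^{-1}e^{-ax}x^{b-1}$ explicitly, which is also a lower bound when $b\ge1$. Iterating it gives an asymptotic expansion in powers of $(ax)^{-1}$.
\end{itemize}
Both are equally elementary and both give a constant depending only on $b$ and the constant implicit in $ax\gg1$.
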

\begin{proof}
If $b\leq1$, then
\[
\int_x^\infty e^{-at}t^b\frac{dt}t\leq x^{b-1}\int_x^\infty e^{-at}dt\leq x^{b-1}a^{-1}e^{-  
ax}.
\]
If $b>1$ and $ax\gg 1$, then the statement can be proved by induction using 
integration by parts.
\end{proof}

\begin{lemma}\label{lem bound integral of Bessel function}
Let $r\gg1$, $y>0$ and $\ell,k>0$ fixed.
Then we have
$$\int_0^\infty\abs{K_{ir}(2\pi y\sqrt{1+u^{\pm 2}})}^\ell u^{\pm k}\frac{du}u\ll_{\ell,k}e^{-\pi\ell r/2}r^{k-\ell/3}y^{-k}.$$
Moreover, if $2\pi y>1+\pi r/2$, then
$$\int_0^\infty\abs{K_{ir}(2\pi y\sqrt{1+u^{\pm 2}})}^\ell u^{\pm k}\frac{du}u\ll_{\ell,k}e^{-\pi\ell y}y^{-k-\ell/2}.$$
If in addition $r\gg T_\mu$ holds, where $T_\mu$ is defined in \eqref{eq:T_mu_def},
then we also have the uniform estimate
$$\int_0^\infty\abs{K_{ir}(2\pi y\sqrt{1+u^{\pm 2}})}^\ell u^{\pm k}\frac{du}u\ll_{\ell,k}e^{-\ell y/T_\mu}e^{-\pi\ell r/2}r^{k-\ell/3}y^{-k}.$$
\end{lemma}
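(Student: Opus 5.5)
The plan is to reduce everything to pointwise bounds for $K_{ir}$ with $r\gg1$ and then integrate. The inputs are the following standard uniform estimates (from the integral representation or from the uniform Airy-type asymptotics).
\begin{itemize}
\item In the oscillatory and transition ranges, $|K_{ir}(x)|\ll e^{-\pi r/2}\min\bigl(r^{-1/3},|x^2-r^2|^{-1/4}\bigr)$ for $0<x\le r+Cr^{1/3}$. In particular $|K_{ir}(x)|\ll e^{-\pi r/2}r^{-1/3}$ for all $x\le 2r$.
\item In the decaying range, $|K_{ir}(x)|\ll e^{-\pi r/2}r^{-1/3}e^{-c x}$ for $x\ge 2r$, with an absolute $c>0$.
\item Trivially, since $K_{ir}(x)=\int_0^\infty e^{-x\cosh t}\cos(rt)\,dt$, we have $|K_{ir}(x)|\le K_0(x)\ll x^{-1/2}e^{-x}$ for $x\ge1$.
\end{itemize}
First, the two signs are equivalent. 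The substitution $u\mapsto 1/u$ preserves $du/u$, swaps $u^{-2}$ with $u^{2}$, and swaps $u^{-k}$ with $u^{k}$. So it suffices to treat $\int_0^\infty|K_{ir}(2\pi y\sqrt{1+u^2})|^\ell u^k\,du/u$.

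\textbf{First bound.} Set $x(u)=2\pi y\sqrt{1+u^2}$ and split at $U$, where $U$ is defined by $x(U)=2r$, so $U\asymp r/y$ when $y\ll r$ (if $2\pi y\ge 2r$ this range is empty).
\begin{itemize}
\item On $u\le U$, use $|K_{ir}|\ll e^{-\pi r/2}r^{-1/3}$. The remaining factor is $\int_0^U u^k\,du/u\ll U^k\ll (r/y)^k$, giving $e^{-\pi\ell r/2}r^{k-\ell/3}y^{-k}$.
\item On $u\ge U$, the factor $e^{-c\ell x(u)}$ with $x(u)\ge 2\pi yu$ dominates $u^k$. By the Proposition (bound \eqref{eq bound partial Gamma}) with $a=2\pi c\ell y$ and lower limit $U$, this range contributes $\ll e^{-\pi\ell r/2}r^{-\ell/3}e^{-c'r}(r/y)^{k}$ up to constants, which is acceptable. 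If $aU$ is not $\gg1$, then $U\ll 1/y$ and a direct bound works.
\end{itemize}

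\textbf{Second bound.} Here $2\pi y\ge1$, so $x(u)\ge 1$ and we use $|K_{ir}(x)|\le K_0(x)\ll x^{-1/2}e^{-x}$. Then $|K_{ir}(x(u))|^\ell\ll y^{-\ell/2}e^{-2\pi\ell y\sqrt{1+u^2}}$.
\begin{itemize}
\item For $u\le1$, use $\sqrt{1+u^2}\ge 1+u^2/3$ and get $e^{-2\pi\ell y}\int_0^1 e^{-c\ell yu^2}u^k\,du/u\ll e^{-2\pi\ell y}y^{-k/2}$.
\item For $u\ge1$, use $\sqrt{1+u^2}\ge u$ and the Proposition with $a=2\pi\ell y\gg1$, $x=1$, giving $\ll e^{-2\pi\ell y}y^{-1}$.
\end{itemize}
Both terms are $\ll e^{-\pi\ell y}y^{-k}$, because the surplus $e^{-\pi\ell y}$ absorbs any power of $y$ in the range $y\gg1$. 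Multiplying by $y^{-\ell/2}$ gives the claim.

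\textbf{Uniform estimate.} Here $r\gg T_\mu$. In the application $r$ is a difference of components of $\mu$, so $r\le 2T_\mu$, and hence $r\asymp T_\mu$.
\begin{itemize}
\item If $y\le T_\mu$, then $e^{-\ell y/T_\mu}\ge e^{-\ell}$, and the first bound already gives the claim.
\item If $y>T_\mu\ge2$, then $2\pi y>1+\pi T_\mu\ge 1+\pi r/2$, so the second bound applies. It remains to show $e^{-\pi\ell y}y^{-k-\ell/2}\ll e^{-\ell y/T_\mu}e^{-\pi\ell r/2}r^{k-\ell/3}y^{-k}$. The exponentials are fine since $\pi\ell y-\ell y/T_\mu\ge \tfrac{\pi}{2}\ell y+(\tfrac\pi2-\tfrac12)\ell y\ge \tfrac{\pi}{2}\ell r$, using $y\ge T_\mu\ge r/2$ and $T_\mu\ge 2$. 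For the powers, $y^{-\ell/2}\le T_\mu^{-\ell/2}\ll r^{-\ell/2}\le r^{k-\ell/3}$, using $r\asymp T_\mu$ and $r\gg 1$.
\end{itemize}

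\textbf{Main obstacle.} The main difficulty is the pointwise Bessel input in the transition region $x\approx r$, where the uniform $r^{-1/3}$ bound is needed. That bound is what produces the exponent $r^{-\ell/3}$. I would cite it from the literature (for example the uniform asymptotics via Airy functions) rather than reprove it. A secondary point is checking that the singular Jacobian near $u=0$ causes no trouble; it does not, since $k>0$ makes $u^k\,du/u$ integrable there.
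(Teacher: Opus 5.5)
Your proposal follows the paper's proof. You reduce to the $u^{+k}$ case by $u\mapsto 1/u$ and split the $u$-integral where the Bessel argument leaves the transition region. You then use $e^{-\pi r/2}r^{-1/3}$ on the inner piece and exponential decay on the outer piece, and obtain the uniform estimate by a case split in $y$. The differences are cosmetic:
\begin{itemize}
\item you split at $x(u)=2r$ rather than at $2\pi yu=1+\pi r/2$;
\item you use the majorant $\abs{K_{ir}}\le K_0$ instead of the quoted large-$x$ bound, and this does yield your decaying-range bullet with $c=1-\pi/4$;
\item in the second bound you split at $u=1$ instead of using $\sqrt{1+u^2}\ge(1+u)/2$.
\end{itemize}
Your remark that the third estimate also needs $r\ll T_\mu$ is correct and genuinely necessary: the estimate fails if $r/T_\mu\to\infty$ with $y\asymp r$. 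The paper uses this tacitly when it says $e^{-\ell y/T_\mu}$ is bounded below for $y\le 1+r$.

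One line of your argument is false as written. With only $r\le 2T_\mu$, the condition $y>T_\mu$ gives just $y\ge r/2$. Then $\frac\pi2\ell y+(\frac\pi2-\frac12)\ell y\ge(\frac\pi2-\frac14)\ell r$, which falls short of $\frac\pi2\ell r$. Concretely, for $r=2T_\mu$ and $y$ just above $T_\mu$, one has $\pi\ell y-\ell y/T_\mu\approx\frac\pi2\ell r-\ell$. The repair is immediate: $\ell y(\pi-1/T_\mu)\ge\frac{\ell r}{2}(\pi-1/T_\mu)\ge\frac\pi2\ell r-\ell$, which costs only the constant $e^{\ell}$. Alternatively, in the paper $r=\im\frac{\mu_1-\mu_n}{2}\le T_\mu$, so $y\ge r$ and your line holds verbatim.

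There are two smaller points.
\begin{itemize}
\item Your first bullet states the Airy-type bound only for $x\le r+Cr^{1/3}$, but the ``in particular'' needs it up to $2r$. On $(r+Cr^{1/3},\pi r/2)$ the $K_0$ majorant does not help, since $e^{-x}>e^{-\pi r/2}$ there. You should quote the bound for all $x>0$, as the paper does.
\item If $r\gg1$ permits $\delta\le r<1$, the bound $K_{ir}(x)\ll_\delta 1$ needs the short integration-by-parts argument given in the paper. The $K_0$ majorant cannot replace it, because $K_0$ is unbounded near $0$.
\end{itemize}
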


\begin{proof}
We use the following bounds for the $K$-Bessel function 
of imaginary argument:
if $x>0$, $r\geq 1$, then we have (see \cite[p.\ 679]{BH09})
\begin{align*}
e^{\pi r/2}\abs{K_{ir}(x)}\ll 
\min(r^{-1/3},
    \abs{r^2-x^2}^{-1/4}),
\end{align*}
and so
\begin{equation}\label{eq:implied bound}
K_{ir}(x)\ll e^{-\pi r/2}r^{-1/3}
\end{equation}
holds for every $x>0$ and $r\geq1$. On the other hand, if
$x>1+\pi r/2$, then the estimate
\begin{equation}\label{eq bounds K Bessel 2}
\abs{K_{ir}(x)}\ll e^{-x}x^{-1/2}
\end{equation}
holds  by \cite[Proposition 9]{HM06} for all $r\in\R$.

If $x>0$ but $0<\delta<r<1$, then the function $K_{ir}(x)$ is uniformly bounded by a constant only depending on 
$\delta$.
We show this by splitting the usual integral representation for $K_{ir}(x)$:
\begin{align*}
K_{ir}(x)&=\int_0^\infty e^{-x\cosh(t)}\cos(rt)dt\\
    &=\int_0^1e^{-x\cosh(t)}\cos(rt)dt+\int_1^\infty e^{-x\cosh(t)}\cos(rt)dt\\
    &\ll1+\left.e^{-x\cosh(t)}\frac{\sin(rt)}r\right|_1^\infty+\frac xr\int_1^\infty e^{-x\cosh(t)}\sinh(t)\sin(rt)dt\\
    &\ll1+\frac1r+\frac xr\int_1^\infty e^{-x\cosh(t)}\sinh(t)dt\\
    &\ll1+\frac1r-\left.\frac xr\frac{e^{-x\cosh(t)}}x\right|_1^\infty\\
    &\ll1+\frac1r+\frac1r.
\end{align*}
This shows that Equation \eqref{eq:implied bound} is valid for all $x>0$ and $r\gg1$
(where the implied constant depends on $\delta$).

Turning to the proof of the statements, first note that it is enough to prove them with positive signs in the exponents and then 
the change of variables $u\mapsto u^{-1}$ gives the result with negative signs. 

 For the first bound we split the integral at the point $u=\frac{\pi r+2}{4\pi y}$:
\begin{align*}
\int_0^\infty&\abs{K_{ir}(2\pi y\sqrt{1+u^2})}^\ell u^k\frac{du}u=\\
&=\int_0^{\frac{\pi r+2}{4\pi y}}\abs{K_{ir}(2\pi y\sqrt{1+u^2})}^\ell u^k\frac{du}u+\int_{\frac{\pi r+2}{4\pi y}}^\infty\abs{K_{ir}(2\pi y\sqrt{1+u^2})}^\ell u^k\frac{du}u=:I_1+I_2.
\end{align*}
In the case of $I_1$,
we apply the bound \eqref{eq:implied bound} yielding
$$\int_0^{\frac{\pi r+2}{4\pi y}}\abs{K_{ir}(2\pi y\sqrt{1+u^2})}^\ell u^k\frac{du}u\ll_\ell
e^{-\pi\ell r/2}r^{-\ell/3}\int_0^{\frac{\pi r+2}{4\pi y}}u^k\frac{du}u
\ll_{\ell,k} e^{-\pi\ell r/2}r^{k-\ell/3}y^{-k}.$$

Now consider $I_2$. In that case, we have $2\pi y\sqrt{1+u^2}\geq\frac{\pi r+2}2$. Using first  \eqref{eq bounds K Bessel 2} and then \eqref{eq bound partial Gamma}, we get:
\begin{align*}
I_2&\ll_\ell\int_{\frac{\pi r+2}{4\pi y}}^\infty e^{-2\pi\ell y\sqrt{1+u^2}}\left(y\sqrt{1+u^2}\right)^{-\ell/2}u^k\frac{du}u\\
&\ll_\ell y^{-\ell/2}\int_{\frac{\pi r+2}{4\pi y}}^\infty e^{-2\pi\ell yu}u^{k-\ell/2}\frac{du}u\\
&\ll_{\ell,k}e^{-\pi\ell r/2}y^{-\ell/2}y^{-1}(r/y)^{k-\ell/2-1} =
e^{-\pi\ell r/2}r^{k-\ell/2-1}y^{-k} .
\end{align*}

If $y\geq\frac{\pi r+2}{4\pi}$, then using $\sqrt{1+u^2}\geq\frac{1+u}2$
the corresponding bound is an easy consequence of \eqref{eq bounds K Bessel 2}:
\begin{align*}
\int_0^\infty\abs{K_{ir}(2\pi y\sqrt{1+u^2})}^\ell u^k\frac{du}u&\ll_\ell\int_0^\infty e^{-\pi\ell y(1+u)}y^{-\ell/2}u^k\frac{du}u\\
    &\ll_{\ell,k}e^{-\pi\ell y}y^{-k-\ell/2}\int_0^\infty e^{-t}t^{k}\frac{dt}t\\
    &\ll_{\ell,k}e^{-\pi\ell y}y^{-k-\ell/2}.
\end{align*}
We conclude with the proof of the last bound, so assume that $r\gg T_\mu$ holds. 
Since $e^{-\ell y/r}$ is bounded from below for 
$ y\leq 1+ r$, so is $e^{-\ell y/ T_\mu}$. Hence the first bound implies the third one. 
If  $y>1+r$, then $2\pi y>1+\pi r/2$ and $y^{-\ell/2}\ll r^{-\ell/3}$. Now the third bound follows from
the second one:
\begin{align*}
\int_0^\infty\abs{K_{ir}(2\pi y\sqrt{1+u^{\pm 2}})}^\ell u^{ k}\frac{du}u&\ll_{\ell,k}e^{-\pi\ell y}y^{-k-\ell/2}\\
    &\ll_{\ell,k}e^{-\pi\ell y/2}e^{-\pi\ell r/2}r^ky^{-k}r^{-\ell/3}\\
     &\ll_{\ell,k}e^{-\ell y/T_\mu}e^{-\pi\ell r/2}r^{k-\ell/3}y^{-k},
\end{align*}
since $T_\mu\geq2$ by definition and $r\gg1$.
\end{proof}

\section{Proof of the main theorem}\label{sec:proof}

Before turning to the argument we state the following formula of Stade that will serve as one of our tools (see \cite[Equation (33)]{BHM20}, \cite[Theorem 1.1]{Sta02}). Using
the notations of Section~\ref{sec:introduction}, for an appropriate $s\in\C$ we have
\begin{align}\label{eq:Stade}
\int_{\R_{>0}^{n-1}}\abs{W_\mu(\diag(t_1,\dots,t_{n-1},1))}^2\prod_{j=1}^{n-1}\frac{t_j^{s-1}\,dt_j}{t_j^{n+1-2j}}=\frac{2^{1-n}}{\Gamma_\R(ns)}\prod_{k,l=1}^n\Gamma_\R(s+\mu_k-\mu_l).
\end{align}
To handle the convergence of the integral on the left-hand side we change to $y$-notation, i.e., 
make the change of variables  $y_j\cdots y_{n-1}=t_j$ ($1\leq j\leq n-1$). The determinant of the 
(upper-triangular) Jacobian is $\prod_{j=1}^{n-1}y_j^{j-1}$, and  left-hand side of \eqref{eq:Stade}
is
\begin{align*}
\int_{\R_{>0}^{n-1}}\abs{W_\mu(\diag(y_1\cdots y_{n-1},\dots,y_{n-1},1))}^2\prod_{j=1}^{n-1}y_j^{j(s+j-n)}\frac{dy_j}{y_j}.
\end{align*}
By \cite[Equation (14)]{BHM20}, we have 
\[
\abs{W_\mu(\diag(y_1\dots y_{n-1},\dots,y_{n-1},1))}\ll_{n,\mu,\epsilon}
\left(\prod_{j=1}^{n-1}y_j^{j(n-j)}\right)^{1/2-\epsilon}
    \exp\left(-\frac1 {T_\mu}\sum_{j=1}^{n-1}y_j\right)
\]
for any $\epsilon>0$, where $T_\mu=\max(2,|\mu_1|,\dots,|\mu_n|)$. Hence the formula holds if the integral of
\[
\prod_{j=1}^{n-1} y_j^{j(n-j)(1-\epsilon)+j(s+j-n)-1}=
\prod_{j=1}^{n-1}y_j^{js-1-\epsilon j(n-j)}
\]
converges in a neighborhood of 0. 
Choosing an appropriate $\epsilon>0$ we get that $\eqref{eq:Stade}$ holds if $j\ree s-1>-1$, i.e.,\ when $\ree s>0$.

The starting point of our proof is a  recursion formula of  Stade.
Before stating it we introduce the notation
\begin{align*}
&W_\mu^*(y_1,\dots,y_{n-1})=\\
&\qquad W_\mu(\mathrm{diag}(y_1\dots y_{n-1},y_{n-2}\dots y_{n-1},\dots,y_{n-1},1))
\prod_{i=1}^{n-1}y_i^{-\frac{i(n-i)}{2}}
\prod_{j=1}^iy_i^{-\frac{\mu_j+\mu_{n+1-j}}{2}},
\end{align*}
then we have the following (see \cite[Equation (32)]{BHM20},  \cite[Equation (4.3)]{Sta01}):
\begin{theorem}
If $n\geq 2$, then
\begin{align*}
W_\mu^\ast(y_1,\dots,y_{n-1})&=2^{2n-3}\int_{\R_{>0}^{n-2}}\prod_{j=1}^{n-1}u_j^{\frac{\mu_{j+1}+\mu_{n-j}-\mu_1-\mu_n}2}K_{\frac{\mu_1-\mu_n}2}\left(2\pi y_j\sqrt{(1+u_{j-1}^2)(1+u_j^{-2})}\right)\\
	&\qquad\qquad\qquad\qquad\qquad\cdot W_{\mu'}^\ast\left(y_2\frac{u_1}{u_2},\dots,y_{n-2}\frac{u_{n-3}}{u_{n-2}}\right)\frac{du_1\cdots du_{n-2}}{u_1\cdots u_{n-2}}
\end{align*}
where
$$\mu'=\left(\mu_2+\frac{\mu_1+\mu_n}{n-2},\dots,\mu_{n-1}+\frac{\mu_1+\mu_n}{n-2}\right)\in\C^{n-2}$$
$u_0=u_{n-1}^{-1}=0$, $u_{n-1}^0=1$, and for $n=2,3$ the inner Whittaker function
$W^\ast_{\mu'}$ is understood to be equal $1$.
\end{theorem}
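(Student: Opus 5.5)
Since this is Stade's recursion \cite[Equation (4.3)]{Sta01}, I would follow the same route and derive it from the Jacquet integral \eqref{eq:Whittaker_integral}. Three reductions come first. By holomorphy in $\mu$ (Jacquet's continuation) and local uniform convergence on both sides, it suffices to prove the identity on the open set $\ree\mu_1>\dots>\ree\mu_n$, where \eqref{eq:Whittaker_integral} converges absolutely, and then continue analytically. The normalising factor $\prod_{j<k}\Gamma_\R(1+\mu_j-\mu_k)$ and the power $\prod_i y_i^{-\frac{i(n-i)}{2}}\prod_{j\le i}y_i^{-\frac{\mu_j+\mu_{n+1-j}}2}$ in $W^*_\mu$ are pure bookkeeping and will be tracked at the end. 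The cases $n=2,3$ are base cases to check directly. For $n=2$ the formula is the classical identity $W_\mu(\diag(y,1))\propto y^{1/2}K_{(\mu_1-\mu_2)/2}(2\pi y)$. For $n=3$ it is Vinogradov--Takhtajan's integral of a product of two $K$-Bessel functions, which also follows from the general step below with the inner factor equal to $1$.

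\textbf{General step.} I would factor $\mathrm U_n(\R)=\mathrm U_{n-2}'\ltimes N$. Here $\mathrm U_{n-2}'$ is the unipotent radical of the middle $\mathrm{GL}_{n-2}$ (rows and columns $2,\dots,n-1$), and $N$ collects the entries in the first row and the last column. I would write $w=w_1w'$, with $w'$ the long element of the middle $\mathrm{GL}_{n-2}$. I would also split $H_\mu$ as a character of $t_1$ and $t_n$ times $H_{\mu'}$ on the middle block; the shift of $\mu'$ by $\frac{\mu_1+\mu_n}{n-2}$ is exactly what makes $\mu'$ have trace zero, so that $H_{\mu'}$ is central-invariant. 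The $N$-integral then splits as follows. The middle entries of the first row and last column are integrated against the trivial character, except for the two entries $u_{12}$ and $u_{n-1,n}$. After conjugating back, the inner $\mathrm U_{n-2}'$-integral becomes $\mathcal W_{\mu'}$ evaluated at a torus element whose coordinates depend on the $N$-variables. This is the Iwasawa decomposition of $w_1 n\, t$ restricted to the middle block.

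\textbf{Producing the $K$-Bessel kernel.} The $N$-integrals are computed one coordinate at a time. Each $\int_\R (1+x^2)^{-s}e(-ax)\,dx$ produces $K_{s-1/2}(2\pi|a|)$ up to $\Gamma$-factors. Iterating these integrals and introducing the $n-2$ auxiliary variables $u_j>0$ to parametrise the radial parts yields the product $\prod_j K_{\frac{\mu_1-\mu_n}2}\bigl(2\pi y_j\sqrt{(1+u_{j-1}^2)(1+u_j^{-2})}\bigr)$ and the rescaled arguments $y_j u_{j-1}/u_j$ of $W^*_{\mu'}$.

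\textbf{Fallback via Mellin transforms.} If the direct unfolding gets too messy, I would instead take the multiple Mellin transform of both sides in $y_1,\dots,y_{n-1}$. On the right, substituting $y_j\mapsto y_ju_j/u_{j-1}$ decouples the variables. Each $K$-Bessel factor then contributes the standard $\int_0^\infty K_\nu(2\pi y)y^{s}\,dy/y=\frac14\pi^{-s}\Gamma(\frac{s+\nu}2)\Gamma(\frac{s-\nu}2)$. The $u$-integrals become Beta integrals, and the inner factor contributes the Mellin transform of $W^*_{\mu'}$. Comparing with the known Mellin transform of $W_\mu$ (Bump and Stade, given by a Barnes-type recursion) and applying Mellin inversion, justified by the decay bound \cite[Equation (14)]{BHM20}, would give the identity.

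\textbf{Main obstacle.} The hard part is the explicit Iwasawa decomposition of $w_1 n t$ and matching variables, so that exactly the quantities $\sqrt{(1+u_{j-1}^2)(1+u_j^{-2})}$ and the exponents $\frac{\mu_{j+1}+\mu_{n-j}-\mu_1-\mu_n}2$ come out. I would first carry this out for $n=4$ to fix the change of variables, then induct. The constant $2^{2n-3}$ is then fixed by the $\Gamma_\R$ normalisations.
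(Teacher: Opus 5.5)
The paper gives no proof of this statement. It is Stade's recursion, imported from \cite[Equation (4.3)]{Sta01} (see also \cite[Equation (32)]{BHM20}). Your proposal therefore has to stand on its own as a derivation, and it does not. The whole content of the theorem sits in the step you label the main obstacle, which you assert rather than carry out.

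The reduction you do describe is also not accurate. Write $u=u'\eta$ with $u'\in\mathrm U'_{n-2}$ and $\eta\in N$, and take the Iwasawa decomposition $w_1\eta t=vsk$. Let $v',s'$ be the middle blocks of $v,s$ and $\psi'$ the standard character of $\mathrm U'_{n-2}$. Since $w_1$ commutes with the middle block and $H_\mu$ only sees the torus part, the substitution $u'\mapsto u'v'^{-1}$ shows that the inner integral equals $\psi'(v')\,\mathcal W_{\mu'}(s')$, up to powers of $s_1$, $s_n$ and $\det s'$. Both the phase $\psi'(v')$ and $s'$ depend on the coordinates of $N$ in a coupled way. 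Already for $n=4$, the entry $v_{23}$ is a nonzero rational function of all five coordinates of $N$. So the middle entries of $N$ are not integrated against a trivial character, and nothing factors into one-variable integrals $\int_\R(1+x^2)^{-s}e(-ax)\,dx$.

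What has to be proved is exactly that this coupled $(2n-3)$-dimensional oscillatory integral collapses to the stated form:
\begin{itemize}
\item $n-1$ Bessel functions, all of index $\frac{\mu_1-\mu_n}2$, with arguments $2\pi y_j\sqrt{(1+u_{j-1}^2)(1+u_j^{-2})}$;
\item $W^*_{\mu'}$ evaluated at $y_ju_{j-1}/u_j$;
\item the exponents $\frac{\mu_{j+1}+\mu_{n-j}-\mu_1-\mu_n}2$ and the constant $2^{2n-3}$.
\end{itemize}
Your sentence ``iterating these integrals and introducing auxiliary variables yields the product'' restates this conclusion rather than proving it.

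The Mellin fallback does not rescue this. After $y_j\mapsto y_ju_j/u_{j-1}$ the inner factor becomes $W^*_{\mu'}(y_2,\dots,y_{n-2})$. The Bessel factors, however, become $K_{\frac{\mu_1-\mu_n}2}\bigl(2\pi y_j\sqrt{(1+u_{j-1}^{-2})(1+u_j^{2})}\bigr)$. So for $2\le j\le n-2$ the variable $y_j$ still occurs in both, and the variables do not decouple. The Mellin transform of the right-hand side is then a multiplicative convolution, i.e.\ an $(n-3)$-fold Barnes-type integral against the Mellin transform of $W^*_{\mu'}$. It is not a product of Gamma and Beta factors.

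Moreover, in Stade's work the Barnes-type recursion for the Mellin transform of $W_\mu$ is itself derived from this integral formula, so matching against it is circular. Even with an independent evaluation of the Mellin transform, you would still have to prove a nontrivial identity between Barnes integrals, which is again the actual content. The analytic-continuation reduction and the base cases $n=2,3$ are fine, but they are the easy part.
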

 The outline of the proof is the following:
 we apply the Cauchy-Schwarz inequality to separate the inner Whittaker function 
 on the right-hand side of the above formula. 
 To ensure convergence we also include  one Bessel function
 and a product  $P(u_1,\dots,u_{n-2})$ of powers of $u_j$ next to it so
 that $2^{-4n+6}|W_\mu^\ast(y_1,\dots,y_{n-1})|^2$ is bounded by 
\begin{align*}
	&\left(\int_{\R_{>0}^{n-2}}\prod_{j=1}^{n-2}\abs{K_j}^2\abs{K_{n-1}}P(u_1,\dots,u_{n-2})^{-1}\right)
    \left(\int_{\R_{>0}^{n-2}}\abs{W_{\mu'}^{\ast(n-2)}}^2\abs{K_{n-1}}P(u_1,\dots,u_{n-2})\right)\nonumber
\end{align*}
(written briefly)
where we also use that $\ree \mu_j=0$ for $1\leq j\leq n$.
Here and later we indicate the rank
in the notation of the Whittaker functions.
Note that the factor
$$K_{n-1}=K_{\frac{\mu_1-\mu_n}2 }\left(2\pi y_{n-1}\sqrt{1+u_{n-2}^2}\right)$$
only depends on
 $u_{n-2}$ and decays exponentially as $u_{n-2}\to\infty$.
For the right factor of the previous product we will apply \eqref{eq:Stade}, while
in the case of the left factor we will use the Cauchy-Schwarz inequality and induction on $n$. 
We will use the same power for each $u_j$, that is,
we set $P(u_1,\dots,u_{n-2})=\prod_{j=1}^{n-2}u_j^s$. 

From now on, we write $r=\im(\frac{\mu_1-\mu_n}2)$. We have $r\gg1$ where we emphasize that the implied constant 
depends directly on the condition of Equation \eqref{eq:Langlands_params_well_spaced}.

\subsection{Estimation of the right factor}
Let us set
\begin{align*}
I_R:&=
\int_{\R_{>0}^{n-2}}
\abs{W_{\mu'}^{\ast(n-2)}\left(y_2\frac{u_1}{u_2},\dots,y_{n-2}\frac{u_{n-3}}{u_{n-2}}\right)}^2
\abs{K_{ir}\left(2\pi y_{n-1}\sqrt{1+u_{n-2}^2}\right)}
    \prod_{j=1}^{n-2}u_j^{s}\frac{du_j}{u_j}
\end{align*}
We are going to show the following:
\begin{prop}\label{prop:right_factor}
    If $s>0$, then
\begin{align*}
I_R&\ll_{n,s}
\prod_{j=1}^{n-1}y_j^{-(j-1)s}\prod_{k,l=2}^{n-1}\Gamma_\R(s+\mu_k-\mu_l)e^{-\pi r/2-y_{n-1}/T_\mu}r^{(n-2)s-1/3}.
\end{align*}
\end{prop}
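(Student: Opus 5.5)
Change variables so that the inner Whittaker function becomes the integrand of Stade's formula \eqref{eq:Stade} in rank $n-2$, and handle the Bessel factor with Lemma~\ref{lem bound integral of Bessel function}.

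\emph{Change of variables.} Put $v_j=y_{j+1}u_j/u_{j+1}$ for $1\le j\le n-3$, and keep $u_{n-2}$ as a free variable. Then $u_j=u_{n-2}\prod_{i=j}^{n-3}v_i/y_{i+1}$, and the measure $\prod_j du_j/u_j$ becomes $\prod_{j\le n-3}dv_j/v_j\cdot du_{n-2}/u_{n-2}$. The weight becomes
\[
\prod_{j=1}^{n-2}u_j^{s}=u_{n-2}^{(n-2)s}\prod_{i=1}^{n-3}(v_i/y_{i+1})^{is},
\]
since $v_i$ appears in $u_1,\dots,u_i$. The $y$-factor is $\prod_{i=1}^{n-3}y_{i+1}^{-is}=\prod_{j=2}^{n-2}y_j^{-(j-1)s}$. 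This agrees with the claimed $\prod_{j=1}^{n-1}y_j^{-(j-1)s}$ except for the factor $y_{n-1}^{-(n-2)s}$, which will come from the Bessel integral.

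\emph{Splitting the integral.} The integral now factors as a product of two pieces.
\begin{itemize}
\item The $u_{n-2}$-integral $\int_0^\infty|K_{ir}(2\pi y_{n-1}\sqrt{1+u^2})|\,u^{(n-2)s}\,du/u$. Apply the third (uniform) bound of Lemma~\ref{lem bound integral of Bessel function} with $\ell=1$ and $k=(n-2)s$. This is legitimate because $r\asymp T_\mu$ under \eqref{eq:Langlands_params_well_spaced}, as $\mu_1,\mu_n$ have the extreme imaginary parts. It gives $e^{-y_{n-1}/T_\mu}e^{-\pi r/2}r^{(n-2)s-1/3}y_{n-1}^{-(n-2)s}$, which supplies the missing $y_{n-1}$ power.
\item The $v$-integral $\int|W^{\ast(n-2)}_{\mu'}(v)|^2\prod_i v_i^{is}\,dv_i/v_i$. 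This must be identified with the left side of \eqref{eq:Stade} for $\mathrm{GL}_{n-2}$ in $y$-notation.
\end{itemize}

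\emph{Identifying the $v$-integral (main obstacle).} Unfold $W^\ast_{\mu'}$. Its normalization factor is $\prod_i v_i^{-i(n-2-i)/2}\prod_{j\le i}v_i^{-(\mu'_j+\mu'_{n-1-j})/2}$. The $\mu'$-part has modulus one, because $\mu'\in(i\R)^{n-2}$; it is purely imaginary, since $\mu_1+\mu_n$ is. So
\[
|W^\ast_{\mu'}|^2=|W_{\mu'}|^2\prod_i v_i^{-i(n-2-i)}.
\]
On the other hand, the $y$-form of \eqref{eq:Stade} for rank $m=n-2$ has weight $v_i^{i(s+i-m)}=v_i^{is}\,v_i^{-i(m-i)}$. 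The two weights therefore match exactly. Moreover $\mu'$ satisfies \eqref{eq:Langlands_params}, since its entries sum to $\sum_{j=2}^{n-1}\mu_j+(\mu_1+\mu_n)=0$. Stade's formula then applies for $s>0$ and gives $\frac{2^{3-n}}{\Gamma_\R((n-2)s)}\prod_{k,l}\Gamma_\R(s+\mu'_k-\mu'_l)$. The shift in $\mu'$ cancels in the differences, leaving $\prod_{k,l=2}^{n-1}\Gamma_\R(s+\mu_k-\mu_l)$, and $1/\Gamma_\R((n-2)s)$ is an $O_{n,s}(1)$ constant.

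\emph{Small cases.} For $n=3$ there are no $v$-variables and $W^\ast\equiv1$, so only the Bessel integral remains and the $\Gamma$-product is empty. For $n=4$, rank $2$ is the standard $\mathrm{GL}_2$ Stade formula. Multiplying the Bessel and Stade pieces yields the claimed bound.
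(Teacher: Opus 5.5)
Your proof is correct and is essentially the paper's argument. The paper substitutes $t_j=y_{j+1}\cdots y_{n-2}u_j/u_{n-2}$ (that is, $t_j=v_j\cdots v_{n-3}$ in your notation) and uses the $t$-form of \eqref{eq:Stade}, while you use the equivalent $y$-form; both then bound the separated $u_{n-2}$-integral by the third estimate of Lemma~\ref{lem bound integral of Bessel function} with $\ell=1$, $k=(n-2)s$. One harmless slip: for $n=3$ the product $\prod_{k,l=2}^{2}\Gamma_\R(s+\mu_k-\mu_l)$ is not empty but equals $\Gamma_\R(s)$, a positive constant depending only on $s$, so it is absorbed into the implied constant.
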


For the proof we first change to the function
$W_{\mu'}^{(n-2)}$ from $W^{*(n-2)}_{\mu'}$ and also make a change of variables, i.e., we set 
\[
t_j=\prod_{k=j}^{n-3}y_{k+1}\frac{u_k}{u_{k+1}}=y_{j+1}\dots y_{n-2}\frac{u_j}{u_{n-2}}
\]
for $j=1,\dots,n-2$ with the convention $t_{n-2}=1$ (meaning that the value of the
empty product is $1$).
We also have
\[
u_j=\frac{t_ju_{n-2}}{y_{j+1}\dots y_{n-2}}\quad \quad \quad (1\leq j\leq n-2).
\]
Note that we do not apply a change of variable to $u_{n-2}$. Then $I_R$ can be written as
\begin{align*}
&\prod_{j=1}^{n-3}(y_{j+1}\cdots y_{n-2})^{-s}\int_{\R_{>0}^{n-2}}\abs{W_{\mu'}^{(n-2)}\left(\diag(t_1,\dots,t_{n-3},1)\right)}^2\abs{K_{ir}\left(2\pi y_{n-1}\sqrt{1+u_{n-2}^2}\right)}\\
	&\qquad\qquad\qquad\qquad\quad\quad\qquad\qquad\times\prod_{j=1}^{n-3}
        \left[
            \left(\frac{t_j}{t_{j+1}}\right)^{-j(n-2-j)}
            (t_ju_{n-2})^{s}\frac{dt_j}{t_j}
        \right]
    u_{n-2}^{s}\frac{du_{n-2}}{u_{n-2}}\\
	&=\prod_{j=2}^{n-2}y_j^{-(j-1)s}\int_{\R_{>0}^{n-3}}
    \abs{W_{\mu'}^{(n-2)}\left(\diag(t_1,\dots,t_{n-3},1)\right)}^2
    \prod_{j=1}^{n-3}t_j^{s+2j-n+1}\frac{dt_j}{t_j}\\
	&\qquad\qquad\qquad\qquad\quad\qquad\qquad\quad\times\int_{\R_{>0}}\abs{K_{ir}\left(2\pi y_{n-1}\sqrt{1+u_{n-2}^2}\right)}u_{n-2}^{(n-2)s}\frac{du_{n-2}}{u_{n-2}}.
\end{align*}
Setting $Y:=\prod_{j=2}^{n-2}y_j^{-(j-1)s}$ and applying  \eqref{eq:Stade} 
for the first integral (with $W_{\mu'}^{(n-2)}$ 
in place of $W_\mu^{(n)}$)
we obtain
\begin{align*}
I_R&=\frac{2^{3-n}\cdot Y}{\Gamma_\R((n-2)s)}\prod_{k,l=2}^{n-1}\Gamma_\R(s+\mu_k-\mu_l)\int_{0}^\infty\abs{K_{ir}\left(2\pi y_{n-1}\sqrt{1+u_{n-2}^2}\right)}u_{n-2}^{(n-2)s}\frac{du_{n-2}}{u_{n-2}}.
\end{align*}
Proposition~\ref{prop:right_factor} follows now from the third estimate
of Lemma~\ref{lem bound integral of Bessel function}.

\subsection{Estimation of the left factor}

Let us set 
\begin{align*}
I_L&=\int_{\R_{>0}^{n-2}}\prod_{j=1}^{n-2}
K_{ir}\left(2\pi y_j\sqrt{(1+u_{j-1}^2)(1+u_j^{-2})}\right)^2
\abs{K_{ir}\left(2\pi y_{n-1}\sqrt{1+u_{n-2}^2}\right)}
\prod_{j=1}^{n-2}u_j^{-s}\frac{du_j}{u_j}.
\end{align*}
First we isolate the integral over $u_{n-2}$ and apply the Cauchy-Schwarz inequality:
\begin{align*}
\int_0^\infty&K_{ir}\left(2\pi y_{n-2}\sqrt{(1+u_{n-3}^2)(1+u_{n-2}^{-2})}\right)^2\abs{K_{ir}\left(2\pi y_{n-1}\sqrt{1+u_{n-2}^2}\right)}u_{n-2}^{-s}\frac{du_{n-2}}{u_{n-2}}\\
    &\qquad\ll\left(\int_0^\infty 
        K_{ir}\left(2\pi y_{n-2}\sqrt{(1+u_{n-3}^2)(1+u_{n-2}^{-2})}\right)^4
        u_{n-2}^{-2s-\gamma}\frac{du_{n-2}}{u_{n-2}}\right)^{1/2}\\
    &\qquad\qquad\qquad\qquad\qquad\quad \times
    \left(\int_0^\infty K_{ir}\left(2\pi y_{n-1}\sqrt{1+u_{n-2}^2}\right)^2u_{n-2}^\gamma\frac{du_{n-2}}{u_{n-2}}\right)^{1/2}.
\end{align*}
We assume  that $\gamma>0$  and keep our previous assumption $s>0$ so that $2s+\gamma>0$ holds,
and apply
 Lemma~\ref{lem bound integral of Bessel function} on the right-hand side:
 in the case of the first factor we have the parameters $\ell =4$, $k=2s+\gamma$,
 $y=y_{n-2}\sqrt{1+u_{n-3}^2}$, while
 in the case of the second factor we have
 $\ell =2$, $k=\gamma$,
 $y=y_{n-1}$.
 Then the integral over $u_{n-2}$ is bounded by
\begin{align*}
&e^{-\pi r -\frac{2y_{n-2}\sqrt{1+u_{n-3}^2}}{T_\mu}}
    r^{s+\gamma/2-2/3}\left(y_{n-2}\sqrt{1+u_{n-3}^2}\right)^{-s-\gamma/2}
\cdot e^{-\frac{\pi r}{2} -\frac{y_{n-1}}{T_\mu}}
    r^{\gamma/2-1/3}y_{n-1}^{-\gamma/2}=\\[3mm]
    &\qquad 
    = \exp\left(-\frac{3\pi r}{2}-\frac{2y_{n-2}\sqrt{1+u_{n-3}^2}}{T_\mu}-\frac{y_{n-1}}{T_\mu}\right)
    r^{s+\gamma-1}y_{n-2}^{-s-\gamma/2}y_{n-1}^{-\gamma/2}
    (1+u_{n-3}^2)^{-\frac{2s+\gamma}{4}}.
\end{align*}
We estimate this further by estimating $1+u_{n-3}^2$ from below. 
We use  $1+u_{n-3}^2\geq 1$ in the exponential while
in the case $n>3$  estimate the last factor  by $1+u_{n-3}^2\geq u_{n-3}^2$
to obtain the bound $B_{n-2}u_{n-3}^{-s-\gamma/2}$, where
\begin{equation}\label{eq:B_n-2}
B_{n-2}:= \exp\left(-\frac{3\pi r}{2}-\frac{2y_{n-2}+y_{n-1}}{T_\mu}\right)
    r^{s+\gamma-1}y_{n-2}^{-s-\gamma/2}y_{n-1}^{-\gamma/2}.
    \end{equation}
 Let us summarize this
in the following estimates: if $n>3$, then
\begin{align}\label{eq:I_L_bound2}
I_L&\ll_{s,\gamma}   B_{n-2}\int_{\R_{>0}^{n-3}}\prod_{j=1}^{n-3}K_{ir}\left(2\pi y_j\sqrt{(1+u_{j-1}^2)(1+u_j^{-2})}\right)^2\left(\prod_{j=1}^{n-4}u_j^{-s}\frac{du_i}{u_i}\right)u_{n-3}^{-2s-\frac \gamma2}\frac{du_{n-3}}{u_{n-3}}.
\end{align}
Note that in the case $n=3$ our bound for $I_L$ is simply $B_{n-2}$.
We now give an upper bound for the latter integral  by induction.
\begin{lemma}
Let $n\in\N^+$, $r\geq1$ and $y_1,\dots,y_n>0$.
If
$\sum_{m=j}^nk_m$ is positive, then
\begin{align*}
\int_{\R_{>0}^n}K_{ir}&\left(2\pi y_1\sqrt{1+u_1^{-2}}\right)^2
\prod_{j=2}^nK_{ir}\left(2\pi y_j\sqrt{(1+u_{j-1}^2)(1+u_j^{-2})}\right)^2
\prod_{j=1}^nu_j^{-k_j}\frac{du_j}{u_j}\\
    &\ll_{k_1,\dots,k_n}
    \exp\left(-\pi nr-\frac2{T_\mu}\sum_{j=1}^n y_j\right)r^{\sum_{j=1}^n jk_j-2n/3}\prod_{j=1}^ny_j^{-\sum_{m=j}^nk_m}.
\end{align*}
\end{lemma}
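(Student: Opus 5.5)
Induction on $n$, integrating out the last variable $u_n$ first, exactly as in the derivation of \eqref{eq:I_L_bound2}. The variable $u_n$ occurs only in the last Bessel factor $K_{ir}(2\pi y_n\sqrt{(1+u_{n-1}^2)(1+u_n^{-2})})^2$ (for $n=1$ this reads $K_{ir}(2\pi y_1\sqrt{1+u_1^{-2}})^2$) and in the weight $u_n^{-k_n}$.

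\textbf{Base case and the $u_n$-integral.} The hypothesis with $j=n$ gives $k_n>0$. We apply the third estimate of Lemma~\ref{lem bound integral of Bessel function} with $\ell=2$, $k=k_n$, negative sign, and $y=y_n\sqrt{1+u_{n-1}^2}$. Here we use that $r\geq 1$ is the parameter $r\gg T_\mu$ of that lemma, as it is in the application. The $u_n$-integral is then bounded by
\[
e^{-2y_n\sqrt{1+u_{n-1}^2}/T_\mu}\,e^{-\pi r}\,r^{k_n-2/3}\,y_n^{-k_n}(1+u_{n-1}^2)^{-k_n/2}.
\]
For $n=1$ this is already the claim. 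For $n\geq 2$ we estimate $\sqrt{1+u_{n-1}^2}\geq 1$ in the exponential, giving $e^{-2y_n/T_\mu}$. We keep $(1+u_{n-1}^2)^{-k_n/2}$ and bound it by $u_{n-1}^{-k_n}$, which is legitimate since $k_n>0$.

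\textbf{Inductive step.} What remains is the integral of the same shape with $n-1$ variables and exponents $k_1,\dots,k_{n-2},\,k_{n-1}+k_n$. The positivity hypotheses carry over: the tail sums $\sum_{m=j}^{n-1}k'_m$ equal the old $\sum_{m=j}^n k_m$ for $j\le n-1$. By the induction hypothesis, this integral is
\[
\ll \exp\Bigl(-\pi(n-1)r-\tfrac2{T_\mu}\textstyle\sum_{j<n}y_j\Bigr)\,r^{\sum_{j\le n-2}jk_j+(n-1)(k_{n-1}+k_n)-2(n-1)/3}\,\prod_{j<n}y_j^{-\sum_{m=j}^n k_m}.
\]
Multiplying by the $u_n$ factor adds $\pi r$ to the exponential and $k_n-2/3$ to the exponent of $r$. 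The total $r$-exponent becomes $\sum_{j=1}^n jk_j-2n/3$, and the $y_n^{-k_n}$ term completes the product. This is exactly the claimed bound.

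\textbf{Main obstacle.} The delicate point is the replacement $(1+u_{n-1}^2)^{-k_n/2}\le u_{n-1}^{-k_n}$. It is harmless pointwise, but it makes the remaining integral converge near $u_{n-1}=0$ only through the exponent $k_{n-1}+k_n$. That exponent is positive precisely by the tail-sum hypothesis, so the hypothesis is exactly what lets the induction close.

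A second point to check is that the third estimate of Lemma~\ref{lem bound integral of Bessel function} is uniform in $y$, so it can be applied with the $u_{n-1}$-dependent $y$. Its implied constant depends only on $\ell=2$ and $k=k_n$, which gives the stated dependence on $k_1,\dots,k_n$.
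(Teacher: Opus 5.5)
Your proposal is correct and follows the paper's proof essentially step for step. Both argue by induction on $n$, integrate out $u_n$ via the third estimate of Lemma~\ref{lem bound integral of Bessel function} with $y=y_n\sqrt{1+u_{n-1}^2}$, use $(1+u_{n-1}^2)^{-k_n/2}\le u_{n-1}^{-k_n}$, and close with the induction hypothesis for $k'_{n-1}=k_{n-1}+k_n$. Your remark that this step tacitly needs $r\gg T_\mu$ rather than just $r\geq 1$ is accurate; the paper relies on that assumption implicitly as well.
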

\begin{proof}
We prove by induction on $n$. If $n=1$, then   Lemma~\ref{lem bound integral of Bessel function} 
gives the result.
Assume now that $n>1$ and the statements hold for $n-1$. We consider the integral over $u_n$. Again by Lemma \ref{lem bound integral of Bessel function}, we have
\[
\int_0^\infty K_{ir}\left(2\pi y_n\sqrt{(1+u_{n-1}^2)(1+u_n^{-2})}\right)^2u_n^{-k_n}\frac{du_n}{u_n}
\ll_{k_n}e^{-\pi r-\frac{2y_n}{T_\mu}}r^{k_n-\frac23}y_n^{-k_n}(1+u_{n-1}^2)^{-\frac{k_n}{2}}
\]
since by assumption $k_n>0$.

Now
we use $(1+u_{n-1}^2)^{-k_n/2}\leq u_{n-1}^{-k_n}$, and it remains
to estimate the integral 
\begin{align*}
I_{n-1}=\int_{\R_{>0}^{n-1}}&K_{ir}\left(2\pi y_1\sqrt{1+u_i^{-2}}\right)^2\prod_{j=2}^{n-1}K_{ir}\left(2\pi y_i\sqrt{(1+u_{j-1}^2)(1+u_j^{-2})}\right)^2\prod_{j=1}^{n-1}u_j^{-k_j'}\frac{du_j}{u_j},
\end{align*}
where $k_j'=k_j$ for $j=1,\dots,n-2$, while $k'_{n-1}=k_{n-1}+k_n$.  By
our original assumption we have 
$\sum_{m=j}^{n-1}k_m'>0$
for every $1\leq j\leq n-1$, so 
 the induction hypothesis gives
\begin{align*}
    I_{n-1}&\ll_{k_1,\dots,k_{n-1}}
    e^{-\pi(n-1)r-\frac2{T_\mu}\sum_{j=1}^{n-1}y_j}r^{\sum_{j=1}^{n-1} jk_j+(n-1)k_n-2(n-1)/3}\prod_{j=1}^{n-1}y_j^{-\sum_{m=j}^{n-1}k_m-k_n}
\end{align*}
and the statement follows.
\end{proof}

Since our earlier 
assumptions imply $js+\gamma/2>0$ for $2\leq j\leq n-2$, we can
apply the
previous lemma
in \eqref{eq:I_L_bound2} to get
\begin{align}\label{eq:I_L_final_bound2}
I_L&\ll_{n,s,\gamma} B_{n-2}\cdot 
\exp\left(-\pi (n-3)r-\frac{2}{T_\mu}\sum_{j=1}^{n-3} y_j\right)
    r^{\frac{n(n-3)}{2}s+\frac {n-3}{2}\gamma-\frac{2(n-3)}{3}}
    \prod_{j=1}^{n-3}y_j^{-(n-1-j)s-\frac{\gamma}{2}}.
\end{align}

\subsection{Conclusion of the proof}\label{sec:conclusion_of_the_proof}
Now we combine the results of the previous sections to obtain the
bound on the Whittaker function. Recall that
$\abs{W_\mu^{\ast(n)}(y_1,\dots,y_{n-1})}^2\ll_n I_L\cdot I_R$,
hence Proposition~\ref{prop:right_factor} together with \eqref{eq:B_n-2}
and \eqref{eq:I_L_final_bound2} imply the following: if 
$s,\gamma>0$, then
\begin{align*}
    W_\mu^{\ast(n)}(y_1,\dots,y_{n-1})
    &\ll_{n,s,\gamma}\exp\left(-\frac{\pi (n-1)r}{2}-\frac{1}{T_\mu}\sum_{j=1}^{n-1} y_j\right)
\prod_{k,l=2}^{n-1}\Gamma_\R(s+\mu_k-\mu_l)^{1/2}\\
    &\qquad\qquad\qquad\qquad\qquad\quad\times
    r^{\frac{(n+1)(n-2)}{4}s+\frac {n-1}{4}\gamma-\frac{n-1}{3}}
    \prod_{j=1}^{n-1}y_j^{-\frac{(n-2)s}{2}-\frac{\gamma}{4}}.
\end{align*}
Finally, we write $\mathcal W_\mu^{(n)}(\diag(y_1\dots y_{n-1},\dots,y_{n-1},1))$ as
\[
W_\mu^{\ast(n)}(y_1,\dots,y_{n-1})\prod_{j=1}^{n-1}y_j^{\frac{j(n-j)}2}\prod_{k=1}^jy_j^{\frac{\mu_k+\mu_{n+1-k}}{2}}\prod_{1\leq k<l\leq n}\Gamma_\R(1+\mu_k-\mu_l)^{-1}.
\]
We consider first the Gamma factors. By Stirling's formula, we have
$$\abs{\Gamma_\R(\sigma+it)}\sim_\sigma\sqrt{2\pi}e^{-\pi\abs t/4}\abs t^{\frac{\sigma-1}2}$$
for $t\to\pm\infty$. Then
\begin{align*}
&e^{-\pi(n-1)r/2}\prod_{k,l=2}^{n-1}\Gamma_\R(s+\mu_k-\mu_l)^{1/2}\prod_{1\leq k<l\leq n}\Gamma_\R(1+\mu_k-\mu_l)^{-1}\\
    &\ll_{n,s}\left(\prod_{k=1}^{n-1}e^{\pi(\mu_k-\mu_n)/4}\right)\left(\prod_{l=2}^{n-1}e^{\pi(\mu_1-\mu_l)/4}\right)\left(\prod_{\substack{k,l=2\\j\neq k}}^{n-1}\abs{\mu_k-\mu_l}^{\frac{s-1}4}\right)e^{-\pi(n-1)(\mu_1-\mu_n)/4}\\
    &\ll_{n,s}
    \prod_{\substack{k,l=2\\j\neq k} }\abs{\mu_k-\mu_l}^{\frac{s-1}4}.
\end{align*}
Since we assume that the parameter $\mu$ is well-spaced, i.e., $\abs{\mu_k-\mu_l}\asymp T_\mu$
for any $k\neq l$,
this is bounded by $T_\mu^{(n-2)(n-3)(s-1)/4}$. 

We also have $r \asymp T_\mu$ and all this yields the bound
\begin{align*}
e^{-\frac{1}{T_\mu}\sum_{j=1}^{n-1} y_j}
\prod_{j=1}^{n-1}\left(\frac{T_\mu}{y_j}\right)^{\frac{n-2}{2}s+\frac{\gamma}{4}}T_\mu^{-\frac{3n^2-11n+14}{12}}
\prod_{j=1}^{n-1}y_j^{\frac{j(n-j)}2}.
\end{align*}
Now choosing small $s$ and $\gamma$ and 
taking into account the comment 
after Theorem~\ref{thm:main_thm} as well,
the proof of Theorem~\ref{thm:main_thm} is complete.

\bibliographystyle{alpha}
\bibliography{ref}

\end{document}